\documentclass[12pt, oneside]{scrartcl}
\usepackage[english]{babel}
\usepackage[T1]{fontenc}
\usepackage{amssymb}
\usepackage{amsmath}
\usepackage{hhline}
\usepackage{longtable}
\usepackage{amscd}
\usepackage{theorem}
\usepackage{array}
\usepackage{delarray}
\usepackage{multicol}
\usepackage{makeidx}

\usepackage{float}


\usepackage{tikz}
\usepackage{calc}
\usetikzlibrary{angles,quotes}
\usetikzlibrary{calc}
\usetikzlibrary{patterns}
\usetikzlibrary{graphs}
\usetikzlibrary{graphs.standard}
\usetikzlibrary{decorations.markings,shapes.geometric,positioning}

\newtheorem{theorem}{Theorem}

\newtheorem{corollary}{Corollary}

\newtheorem{remark}{Remark}
\newtheorem{claim}{Claim}[theorem]

\newtheorem{problem}{Problem}

\newenvironment{proof}[1][Proof]{\textbf{#1.} }{\ \rule{0.5em}{0.5em}}

%
%
%

%
%
%
\usepackage{tikz}
\bibliographystyle{alpha}

\author{Ayman El Zein\footnote{Computer Science Department, University of Sciences and Arts in Lebanon, Beirut, Lebanon; KALMA Laboratory, Faculty of Sciences, Lebanese University, Beirut, Lebanon}, Maidoun Mortada\footnote{KALMA Laboratory, Faculty of Sciences, Lebanese University, Baalbek, Lebanon; Basic and Applied Sciences Research, Al Maaref University, Beirut, Lebanon; and Graph Theory and Operation Research, Department of Mathematics and Physics, Lebanese International University (LIU), Beirut, Lebanon}}
    
\begin{document} 
\title{Advances on the Packing Coloring Conjectures of Subcubic Graphs}
\maketitle

\begin{abstract}
For a non-decreasing sequence of integers $S=(s_1,s_2, \dots, s_k)$, an $S$-packing coloring of $G$ is a partition of $V(G)$ into $k$ subsets $V_1,V_2,\dots,V_k$ such that the distance between any two distinct vertices $x,y \in V_i$ is at least $s_{i}+1$, $1\leq i\leq k$. The packing chromatic number $\chi_{\rho}(G)$ of a graph $G$ is the smallest integer $p$ such that $G$ is $(1,2,\dots ,p)$-packing colorable. Gastineau and Togni asked whether the subdivision $S(G)$ of every subcubic graph $G$ has $\chi_{\rho}(S(G))\leq 5$ and whether every subcubic graph, except the Petersen graph, is $(1,1,2,2)$-packing colorable; these questions were later conjectured by Bre\v{s}ar et al. Moreover,  Gastineau and Togni proved that a positive answer to the second question implies a positive answer to the first. In this paper, we completely resolve the second question for connected non-regular subcubic graphs, proving that they are $(1,1,2,2)$-packing colorable and hence satisfy $\chi_{\rho}(S(G)) \leq 5$. We also establish the same result for several classes of cubic graphs, including those with diamonds, certain cut-vertices, and bridges on short cycles.
Finally, we strengthen the recent result of Liu, Zhang, and Zhang [\textit{Discrete Math.} 348 (11) (2025). 114610] that every subcubic graph is $(1,1,2,2,3)$-packing colorable by proving that every connected cubic graph admits a $(1,1,2,2,k)$-packing coloring in which at most one vertex receives color $k$, where $k$ is arbitrary. This not only simplifies the existing argument but also strictly improves the bound.\end{abstract}

\noindent\textbf{Mathematics Subject Classification:} 05C15\\
\textbf{Keywords}: graph, coloring, $S$-packing coloring, packing chromatic number, cubic graph, subcubic graph.

\section{Introduction}
All graphs considered in this paper are simple graphs. For a graph $G$, we denote its vertex set by $V(G)$ and its edge set by $E(G)$. For a vertex $x\in V(G)$, $N_G(x)$ represents the set of its neighbors. Besides, the degree of $x$, denoted by $d_G(x)$, is the number of the neighbors of $x$. We use $\Delta(G)$ and $\delta(G)$ to represent the maximum and minimum degree of $G$, respectively. A graph $G$ is called subcubic graph if $\Delta(G)\leq 3$, and it is called regular subcubic graph or simply cubic graph if $d_G(x)=3$ for every vertex $x$ in $G$. 

The distance between two vertices $x$ and $y$ in $G$ is denoted by $dist_G(x,y)$ or simply $dist(x,y)$. The square graph of $G$, $G^2$, is the graph obtained from $G$ after adding an edge between two vertices $x$ and $y$ in $G$ whenever  $dist_G(x,y)=2$. It is important to note that if $G^2$ is bipartite then $V(G)$ can be partitioned into two sets, $V_1$ and $V_2$, such that any two vertices in $V_i$, $1\leq i\leq 2$, are at distance at least three from each other.  
For $H \subseteq V (G)$, we denote by $G[H]$ the subgraph of $G$ induced by $H$. The subdivision of $G$, denoted by $S(G)$, is the graph obtained from $G$ after replacing each edge in $G$ by a path of length two. 

Given a sequence of positive integers $(s_1,s_2,\dots,s_k)$ where $s_1\leq s_2\leq \dots\leq s_k$, an $S$-packing coloring of a graph $G$ is a partition of the vertex set $V(G)$ into subsets $V_1,\; V_2,\dots, \; V_k$, such that for any two distinct vertices $x$ and $y$ in the same subset $V_i$, $1\leq i\leq k$, $dist_G(x,y)\geq s_i+1$. The packing chromatic number $\chi_{\rho}(G)$ of a graph $G$ is the smallest integer $k$ such that $G$ is $(1,2,\dots,k)$-packing colorable.  Under the name of broadcast chromatic number, this parameter was originally introduced by Goddard et al.~\cite{d}. Much of the research on the packing chromatic number has focused on subcubic graphs, and it has been studied in great depth, as detailed in the survey by Bre\v{s}ar et al.~\cite{bfk}.
Balogh, Kostochka, and Liu \cite{bkl}, as well as Bre\v{s}ar and Ferme \cite{bf}, demonstrated that the packing chromatic number for such graphs is unbounded. Several studies \cite{3,11,8,9,16,24,25} have explored methods for establishing bounds for $\chi_{\rho}(G)$ or identifying sequences $S$ that allow these graphs to be $S$-packing colorable. Additionally, Balogh, Kostochka, and Liu~\cite{3} showed $\chi_{\rho}(S(G))$ is bounded by 8 for subcubic graphs. Gastineau and Togni \cite{16} posed the question of whether $\chi_{\rho}(S(G))\leq 5$, which was later conjectured by Bre\v{s}ar et al.~\cite{9}.

Gastineau and Togni~\cite{16} further showed that for a subcubic graph $G$ to satisfy $\chi_{\rho}(S(G)) $ is bounded by $5$, it is sufficient for $G$ to be $(1, 1, 2, 2)$-packing colorable. Yang and Wu~\cite{a} proved that every 3-irregular subcubic graph is $(1,1,3)$-packing colorable and a simpler proof for the same result was provided in \cite{mai}. Furthermore, Bre\v{s}ar et al.~\cite{9} proved that if $G$ is a generalized prism of a cycle, then $G$ is $(1, 1, 2, 2)$-packing colorable if and only if $G$ is not the Petersen graph. Finally, Tarhini and Togni \cite{i} proved that every cubic Halin graph is $(1, 1, 2, 3)$-packing colorable. Besides, Liu et al.~\cite{m} proved that subcubic graphs with maximum average degree less than $\frac{30}{11}$ are $(1,1,2,2)$-packing colorable and Bre\v{s}ar, Kuenzel, and Rall \cite{BKR} proved the same result for claw-free subcubic graphs. Most recently, Liu, Zhang and Zhang \cite{LZZ24} established that every subcubic graph is $(1,1,2,2,3)$-packing colorable, thus showing that the packing chromatic number of the subdivision of any subcubic graph is at most $6$.

In~\cite{26,27,mai1}, Mortada and Togni investigated the $S$-packing coloring for special subclasses of subcubic graphs that are defined according to the existence and the number of the vertices with degree two among the neighbors of a vertex of degree three. Their proofs were based on the concept of a maximal independent set, where the weight of a vertex of degree three exceeds that of a vertex of degree two. Inspired by these results, we found that it is important to study the subcubic graph that contains at least one vertex of degree less than three. 

In this paper, we prove that every connected non-regular subcubic graph is $(1,1,2,2)$-packing colorable, which immediately implies $\chi_\rho(S(G)) \leq 5$ for this large class. We then extend the result to several important families of cubic graphs: those containing a diamond, those with a vertex whose removal yields three connected components, and those having a bridge whose ends lie on cycles of length at most four. As a conclusion, if $G$ is one of the previous graphs, then $\chi_{\rho}(S(G))\leq 5$. Finally, we strengthen the result of Liu, Zhang, and Zhang \cite{LZZ24} by proving that every connected cubic graph admits a $(1,1,2,2,k)$-packing coloring in which at most one vertex is colored $k$. This strictly improves the known bound, since $k$ can be chosen arbitrarily large, and gives a short and self-contained proof that $\chi_\rho(S(G)) \leq 6$ for all subcubic graphs.

\section{Non-Regular Subcubic Graphs} 
In this section, we deal with non-regular subcubic graphs.
\begin{theorem}
Every connected non-regular subcubic graph is $(1,1,2,2)$-packing colorable.
\end{theorem}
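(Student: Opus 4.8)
The plan is to recast the problem in terms of two bipartite subgraphs and then run a minimal counterexample argument. The combinatorial heart is the following reformulation. A set of vertices is pairwise at distance at least $2$ exactly when it is independent in $G$, and pairwise at distance at least $3$ exactly when it is independent in $G^2$. Hence a $(1,1,2,2)$-packing coloring is precisely a partition $V(G)=W\sqcup U$ for which $G[W]$ is bipartite (the two color classes of weight $1$ being its two sides) and $G^2[U]$ is bipartite (the two color classes of weight $2$ being its two sides). So the theorem is equivalent to the statement that one can always delete a set $W$ inducing a bipartite subgraph of $G$ in such a way that the remaining square graph $G^2[U]$ becomes bipartite, that is, $W$ meets every odd cycle of $G^2$. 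Since vertices in different components are at infinite distance, $G$ is $(1,1,2,2)$-packing colorable if and only if each of its components is, so I may assume $G$ is connected; a connected non-regular subcubic graph then has $\delta(G)\le 2$, and this single low-degree vertex is essentially the only slack the hypothesis provides.

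Second, I would take a connected non-regular subcubic graph $G$ that is not $(1,1,2,2)$-packing colorable with the fewest vertices, and aim to prove that every vertex of degree at most $2$ lies in a reducible configuration. Since $G$ has such a vertex, this yields a contradiction and no counterexample exists. Concretely, for a vertex $v$ with $d_G(v)\le 2$ I would delete $v$ (or a small gadget around $v$), observe that the resulting smaller graph still falls under the induction hypothesis, take a coloring of it, and extend that coloring to $v$. The bookkeeping splits according to the degrees of $v$, of its neighbors, and of its neighbors' neighbors, exactly the neighborhood-degree case distinction used by Mortada and Togni, because these degrees bound the ball $B_2(v)$ of vertices within distance $2$ of $v$, and this ball is what governs the extension.

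The extension step is where the real difficulty lies, and I expect it to be the main obstacle. For the two weight-$1$ colors the only obstructions are the at most two colored neighbors of $v$, so these are easy to avoid; but a weight-$2$ color is forbidden at $v$ by any colored vertex of $B_2(v)$, and for a degree-$2$ vertex one can have $\abs{B_2(v)}=6$, so both weight-$2$ colors may be blocked simultaneously and no free color need exist. Rather than insist on a free color, I would recolor inside the square graph: by the reformulation the weight-$2$ part is a proper $2$-coloring of $G^2[U]$, so a Kempe-type interchange in this two-colored graph is the natural device for liberating a weight-$2$ color at $v$, and the low degree of $v$ (or of one of its neighbors) is exactly what keeps $B_2(v)$ small enough, and the propagation of the interchange short enough, to make such a recoloring available. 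The configurations in which no single deletion succeeds would be handled by deleting a slightly larger gadget and re-solving, still within the induction.

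Finally, I would control the two structural hazards created by the reductions. Deleting vertices can produce a cubic component, where the Petersen-type obstruction lives, or can disconnect $G$; to remain inside the induction hypothesis I would choose the deleted set to preserve connectivity and the presence of a vertex of degree at most $2$, and I would dispose of the finitely many degenerate cases, such as $K_3$ with a pendant vertex and other small graphs, together with any reduced graph that turns out to be a cycle, by exhibiting explicit colorings (cycles are easily seen to be $(1,1,2,2)$-packing colorable). Once every vertex of degree at most $2$ has been shown reducible, a minimal counterexample can contain no such vertex and is therefore cubic, contradicting non-regularity; this contradiction proves the theorem.
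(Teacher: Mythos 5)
Your opening reformulation is correct and matches the paper's: a $(1,1,2,2)$-packing coloring is exactly a partition $V(G)=W\sqcup U$ with $G[W]$ bipartite and $G^2[U]$ bipartite. But from that point on your proposal is a plan rather than a proof, and the plan has a genuine gap precisely at the step you yourself flag as the main obstacle: you never establish that the coloring of $G-v$ (or of $G$ minus a ``gadget'') can actually be extended. You correctly observe that for a degree-$2$ vertex $v$ the ball $B_2(v)$ can contain six colored vertices, so both weight-$2$ colors and both weight-$1$ colors can be simultaneously blocked, and your proposed remedy --- a Kempe-type interchange in $G^2[U]$ --- is not developed and does not obviously work. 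In a proper $2$-coloring of $G^2[U]$ a Kempe chain is an entire connected component of $G^2[U]$, and if the vertices of $B_2(v)\cap U$ carrying the two weight-$2$ colors lie in one such component, swapping accomplishes nothing; you give no argument that the low degree of $v$ prevents this, nor any specification of the ``slightly larger gadgets'' that would rescue the failing configurations, nor a treatment of the fact that deletions can leave a cubic component (where the problem is still open) outside the reach of your induction. This local reducible-configuration strategy is essentially what earlier work used to handle only special subclasses, and there is no reason to believe the case analysis closes.

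The paper's proof is structurally different and is worth contrasting. After the same reformulation and the same observation that a minimum counterexample has $\delta(G)=2$, it makes a single global extremal choice: it defines a weight $w(x)=1+\min\{\dist(x,u):d_G(u)=2\}$ and takes an induced bipartite subgraph $S$ maximizing the pair $\left(|E(S)|,\,w(S)\right)$ lexicographically. It then proves directly that $G^2[\overline{S}]$ is bipartite, by showing that any odd cycle in $G^2[\overline{S}]$ would permit an exchange of cycle vertices for their common neighbors in $S$ that either increases $|E(S)|$ or preserves it while increasing $w(S)$ --- the parity of the odd cycle is what forces a strict gain in at least one coordinate. The non-regularity hypothesis enters only through the weight function (which is finite because a degree-$2$ vertex exists), not through any vertex deletion. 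If you want to salvage your approach, the lesson is that the ``slack'' provided by the low-degree vertex is better spent globally, as a tiebreaker in an extremal choice of $W$, than locally, as a site for induction.
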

\begin{proof}
Suppose to the contrary that the statement is false and let $G$ be a counter-example with minimum order. We claim $\delta(G)=2$. In fact, otherwise, let $u\in G$ be a vertex of minimum degree and let $G'=G-u$. By the minimality of $G$, the graph $G'$ is $(1,1,2,2)$-packing colorable. Let $1_a$, $1_b$, $2_a$, $2_b$ be the colors used to color $G'$. Either $1_a$ or $1_b$ is not the color of the neighbor of $u$. By giving this color to $u$, we obtain a $(1,1,2,2)$-packing coloring of $G$, a contradiction.\\
For any vertex $x\in V(G)$, we define the following weight:
$$w(x)=1+\min \{dist(x,u): d_G(u)=2\}.$$
The defined weight satisfies the following properties:\begin{itemize}
\item[(P1)] If $x,y\in V(G)$ are adjacent vertices, then $|w(x)-w(y)|\leq 1$.
\item[(P2)] If $x\in V(G)$ is such that $d_G(x)=3$, then $w(x)=1+\min \{w(y): y\in N_G(x)\}$.
\end{itemize}
For any subgraph $K$ of $G$, we define its weight as follows:
$$w(K)=\displaystyle \sum_{x\in V(K)} w(x).$$
Let $S$ be a bipartite induced subgraph of $G$ such that $(|E(S)|,w(S))$ is maximum using lexicographic ordering. Let $\overline{S}$ be the subgraph induced by $V(G)\setminus S$ and consider a partition $\{S_1,S_2\}$ of $V(S)$ such that $S_1$ and $S_2$ are independent sets. Let $X_S$ be the subgraph of $G$ induced by the non-isolated vertices in $G[\overline{S}]$. Suppose $S$ is chosen such that $w(X_S)$ is maximum.\\
Our plan is to show that the graph $G^2[\overline{S}]$ is bipartite. Considering two independent sets in $G^2[\overline{S}]$, $H_1$ and $H_2$, forming a partition of $V(\overline{S})$, then coloring the vertices of $S_1,S_2,H_1,H_2$ by the colors $1_a,1_b,2_a,2_b$ respectively, leads to a $(1,1,2,2)$-packing coloring of $G$, which is a contradiction.\\
To prove that $G^2[\overline{S}]$ is bipartite, we will study the relations of the vertices of $V(\overline{S})$.
\begin{claim}\label{Claim1}
For all $x\in V(\overline{S})$, there exists a neighbor $x_1$ (resp., $x_2$) of $x$ in $S_1$ (resp., in $S_2$) such that $N_G(x_1)\cap S_2\neq\emptyset$ (resp., $N_G(x_2)\cap S_1\neq \emptyset$).
\end{claim}
\begin{proof}
If $x$ has no neighbors in $S_1$, then $S'=S\cup \{x\}$ is bipartite with $|E(S')|\geq |E(S)|$ and $w(S')>w(S)$, a contradiction. Thus, $x$ has neighbors in both $S_1$ and $S_2$. Suppose to the contrary that all neighbors of $x$ in $S_1$ have no neighbors in $S_2$. Let $S'_1=(S_1\setminus N_G(x))\cup \{x\}$ and $S'_2=S_2\cup (N_G(x)\cap S_1)$. The subgraph $S'=G[S'_1\cup S'_2]$ of $G$ is bipartite with $|E(S')|> |E(S)|$, a contradiction. Similarly for $S_2$.
\end{proof}\\
\\
By Claim \ref{Claim1}, we see that the connected components of $\overline{S}$ are vertices and edges. 
\begin{claim}\label{Claim2}
For all $z\in V(S)$, we have $|N_G(z)\cap V(\overline{S})|\leq 2$.
\end{claim}
\begin{proof}
Suppose to the contrary that there exists $z\in V(S)$ such that $|N_G(z)\cap V(\overline{S})|=3$. Obviously, $z$ is isolated in $S$. Then, by Claim \ref{Claim1}, any neighbor of $z$ in $V(\overline{S})$ has two neighbors in $S$ other than $z$. Hence, the neighbors of $z$ are $3$-vertices and any neighbor of these neighbors is in $V(S)$. Now, by Property (P2), $z$ has a neighbor $x\in V(\overline{S})$ such that $w(x)<w(z)$. Again, since $x$ is a $3$-vertex, by Property (P2), $x$ has a neighbor $y\in V(S)$ such that $w(y)<w(x)$. Without loss of generality, suppose that $y\in S_1$. Moreover, as $z$ is isolated in $S$, we may assume that $z\in S_2$, and so, by Claim \ref{Claim1}, $x$ has a neighbors in $S_2$ other than $z$. Thus, the set $(S_1\setminus \{y\})\cup \{x\}$ is independent. Therefore, the subgraph $S'=G[((S_1\setminus \{y\})\cup \{x\})\cup S_2]$ of $G$ is bipartite with $|E(S')|\geq |E(S)|$ and $w(S')>w(S)$, a contradiction. In fact, as $x$ and $y$ are adjacent, $y$ has at most two neighbors in $S$. On the other hand, $x$ has two neighbors in $V(S)$ other than $y$. So, when replacing $y$ by $x$ in $S$, the number of edges will remain the same or increase by one. Moreover, as $w(x)>w(y)$, we have $w(S')>w(S)$.
\end{proof}

\begin{claim}\label{Claim3}
Let $x\in V(\overline{S})$ such that $|N_G(x)\cap V(S)|=3$ and let $N_G(x)=\{x_1,x_2,x_3\}$ such that $x_1$ and $x_2$ are either both in $S_1$ or in $S_2$. Then, $d_S(x_3)=2$ and $w(x_3)\geq w(x)$.
\end{claim}
\begin{proof}
Without loss of generality, suppose that $x_1$ and $x_2$ are in $S_1$. Suppose to the contrary that $d_S(x_3)=1$. The bipartite induced subgraph $S'=G[S_1 \cup ((S_2\setminus \{x_3\})\cup \{x\})]$ of $G$ has $|E(S')|>|E(S)|$, a contradiction. We still need to prove that $w(x_3)\geq w(x)$. Again, for the sake of contradiction, suppose that $w(x_3)<w(x)$, then the bipartite induced subgraph $S'=G[S_1 \cup ((S_2\setminus \{x_3\})\cup \{x\})]$ of $G$ has $|E(S')|=|E(S)|$ and $w(S')>w(S)$, a contradiction.
\end{proof}\\
\\
Consequently, we have the following:
\begin{remark}\label{remark1}
Let $x,y,z\in V(\overline{S})$ such that $x$ and $y$ (resp., $y$ and $z$) have a common neighbor $u$ (resp., $v$) in $V(S)$. The following hold: \begin{itemize}
    \item[(i)] If $|N_G(y)\cap V(S)|=3$, then $u$ and $v$ are either both in $S_1$ or in $S_2$.
    \item[(ii)] If $|N_G(y)\cap V(S)|=2$, then either $u\in S_1$ and $v\in S_2$ or $u\in S_2$ and $v\in S_1$. 
\end{itemize}
\end{remark}
To complete the proof, we have to prove that $G^2[\overline{S}]$ is bipartite.
\begin{claim}\label{Claim4}
The graph $G^2[\overline{S}]$ is bipartite.
\end{claim}
\begin{proof}
Suppose to the contrary that $H$ is not bipartite. Then, $H$ contains an odd cycle $C=x_1\dots x_kx_1$. Let $I=\{1,\dots ,k\}$. We need to study two cases:\\
\\
\textbf{Case 1:} For all $i\in I$, we have $|N_G(x_i)\cap V(S)|=3$.\\
For notation convenience, we assume that the indices are taken cyclically, meaning that $x_0$ refers to $x_k$ and $x_{k+1}$ refers to $x_1$. Clearly, the distance in $G$ between $x_i$ and $x_{i+1}$ is two, for all $i\in I$.\\
Now, for $i\in I$, let $y_i\in S$ be a common neighbor of $x_i$ and $x_{i+1}$. By Remark \ref{remark1}, the vertices $y_1,\dots ,y_k$ are either all in $S_1$ or in $S_2$, say $S_1$. Moreover, by Claim \ref{Claim3}, for all $i\in I$, there exists $z_i\in S_2$ such that $w(z_i)\geq w(x_i)$. Thus, by Property (P2), we have the following:\begin{itemize}
\item[(C1)] For all $i\in I$, either $w(y_{i-1})<w(x_i)$ or $w(y_i)<w(x_i)$.
\end{itemize}
Let $I_r=\{i\in I: w(x_i)>w(y_i)\}$ and $I_l=\{i\in I: w(x_i)>w(y_{i-1})\}$. By Property (C1), we have $I_r\cup I_l = I$. Then, without loss of generality, we may suppose that $|I_r|\geq \frac{|I|}{2}=\frac{k}{2}$. But $k$ is odd, then $|I_r|\geq \frac{|I|+1}{2}>|I\setminus I_r|$. Then,
\begin{equation}\label{Eq1}
\begin{aligned}
\sum_{i\in I} (w(x_i)-w(y_i)) &= \sum_{i\in I_r} (w(x_i)-w(y_i)) + \sum_{i\in I\setminus I_r} (w(x_i)-w(y_i))\\
&\geq \sum_{i\in I_r} (1) + \sum_{i\in I\setminus I_r} (-1)\\
&= |I_r| - |I\setminus I_r|\\
&>0.
\end{aligned}
\end{equation}
Now, let $S'_1=(S_1\setminus \{y_1,\dots ,y_k\})\cup \{x_1,\dots ,x_k\}$ and let $S'=G[S'_1\cup S_2]$. The subgraph $S'$ (see Figure \ref{Fig1}) of $G$ is bipartite with $|E(S')|\geq |E(S)|$ and $w(S')>w(S)$, a contradiction. In fact, $|E(S')|\geq |E(S)|$ since, for all $i\in I$, $x_i$ has a neighbor in $S_2$ while $y_i$ has at most one neighbor in $S_2$. Moreover, $w(S')>w(S)$ follows from (\ref{Eq1}).
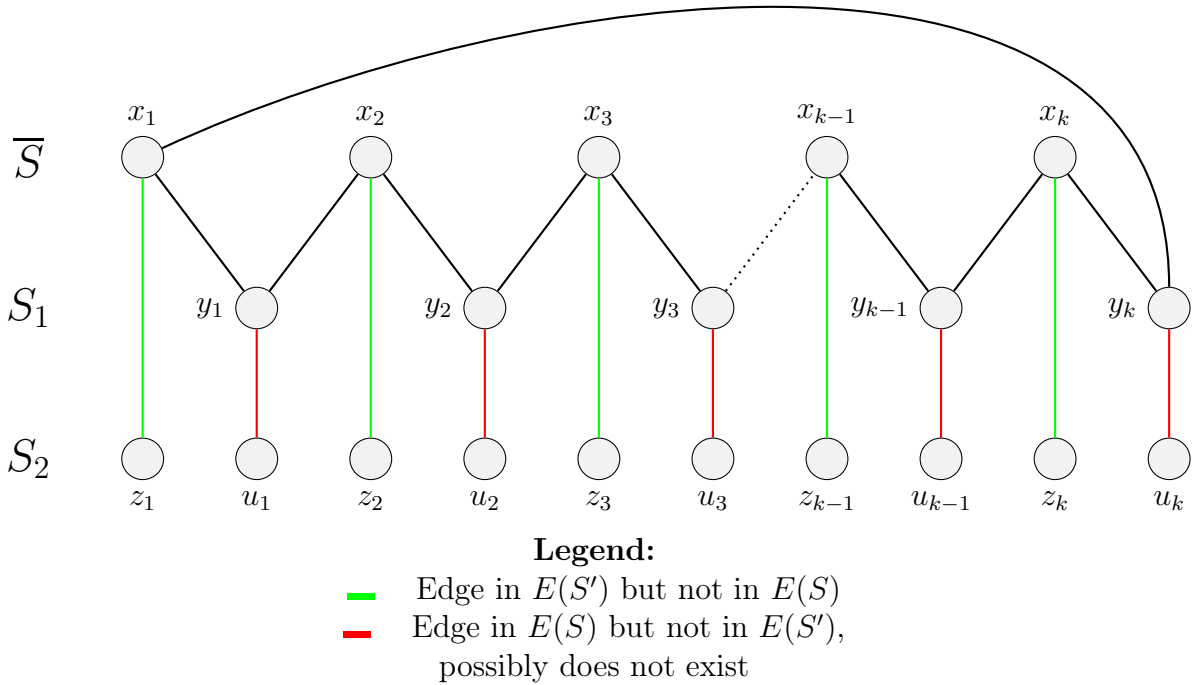
\begin{figure}[h!]
\centering
\begin{tikzpicture}[
    node/.style={circle, draw, fill=gray!10, minimum size=5.5mm},
    edge/.style={thick},
    green edge/.style={thick, draw=green},
    red edge/.style={thick, draw=red},
    black edge/.style={thick, draw=black},
    dotted edge/.style={thick, dotted},
    weight/.style={font=\bfseries\small}
]

\node[node,label={above:$x_1$}] (x1) at (0, 0) {};
\node[node,label={above:$x_2$}] (x2) at (3, 0) {};
\node[node,label={above:$x_3$}] (x3) at (6, 0) {};
\node[node,label={above:$x_{k-1}$}] (xk-1) at (9, 0) {};
\node[node,label={above:$x_k$}] (xk) at (12, 0) {};

\node[node,label={left:$y_1$}] (y1) at (1.5, -2) {};
\node[node,label={left:$y_2$}] (y2) at (4.5, -2) {};
\node[node,label={left:$y_3$}] (y3) at (7.5, -2) {};
\node[node,label={left:$y_{k-1}$}] (yk-1) at (10.5, -2) {};
\node[node,label={left:$y_k$}] (yk) at (13.5, -2) {};

\node[node,label={below:$u_1$}] (u1) at (1.5, -4) {};
\node[node,label={below:$u_2$}] (u2) at (4.5, -4) {};
\node[node,label={below:$u_3$}] (u3) at (7.5, -4) {};
\node[node,label={below:$u_{k-1}$}] (uk-1) at (10.5, -4) {};
\node[node,label={below:$u_k$}] (uk) at (13.5, -4) {};

\node[node,label={below:$z_1$}] (z1) at (0, -4) {};
\node[node,label={below:$z_2$}] (z2) at (3, -4) {};
\node[node,label={below:$z_3$}] (z3) at (6, -4) {};
\node[node,label={below:$z_{k-1}$}] (zk-1) at (9, -4) {};
\node[node,label={below:$z_k$}] (zk) at (12, -4) {};

\draw[green edge] (x1) -- (z1);
\draw[green edge] (x2) -- (z2);
\draw[green edge] (x3) -- (z3);
\draw[green edge] (xk-1) -- (zk-1);
\draw[green edge] (xk) -- (zk);

\draw[red edge] (y1) -- (u1);
\draw[red edge] (y2) -- (u2);
\draw[red edge] (y3) -- (u3);
\draw[red edge] (yk-1) -- (uk-1);
\draw[red edge] (yk) -- (uk);

\draw[black edge] (x1) -- (y1);
\draw[black edge] (x2) -- (y2);
\draw[black edge] (x3) -- (y3);
\draw[black edge] (xk-1) -- (yk-1);
\draw[black edge] (xk) -- (yk);

\draw[black edge] (x2) -- (y1);
\draw[black edge] (x3) -- (y2);
\draw[black edge] (xk) -- (yk-1);
\draw[black edge] (x1) to[out=25, in=90] (yk);

\draw[dotted edge] (y3) -- (xk-1);

\node at (-1.5, 0) {\Large $\overline{S}$}; 
\node at (-1.5, -2) {\Large $S_1$}; 
\node at (-1.5, -4) {\Large $S_2$}; 

\end{tikzpicture}

\begin{minipage}[t]{0.5\textwidth}
\centering
\textbf{Legend:}\\
\textcolor{green}{\rule{10pt}{2pt}} \quad Edge in $E(S')$ but not in $E(S)$ \\
\textcolor{red}{\rule{10pt}{2pt}} \quad Edge in $E(S)$ but not in $E(S')$, possibly does not exist
\end{minipage}
\caption{Representation of Case 1 of Claim \ref{Claim4}.}
\end{figure}\label{Fig1}\\
\\
\textbf{Case 2:} There exists $i\in I$ such that $|N_G(x_i)\cap V(S)|=2$.\\
Without loss of generality, suppose that $|N_G(x_1)\cap V(S)|=2$. By Remark \ref{remark1}, we can deduce that there exists $m\in \{2,\dots ,k\}$ such that $|N_G(x_m)\cap V(S)|=2$. Suppose that $m$ is chosen minimum. That is, $|N_G(x_i)\cap V(S)|=3$ for all $i\in \{2,\dots ,m-1\}$. Let $y_i\in S$ be the common neighbor of $x_i$ and $x_{i+1}$, for all $i\in \{1,\dots ,m-1\}$. By Remark \ref{remark1}, all vertices $y_1,\dots ,y_{m-1}$ are either all in $S_1$ or in $S_2$, say $S_1$. Let $z_i$ be the neighbor of $x_i$ in $S_2$, for all $i\in \{1,\dots ,m\}$.\\
\\
\textbf{Subcase 2.1:} $x_1$ and $x_m$ are not adjacent.\\
Let $S'_1=(S_1\setminus \{y_1,\dots ,y_{m-1}\})\cup \{x_1,\dots ,x_m\}$ and let $S'=G[S'_1\cup S_2]$. The subgraph $S'$ (see Figure \ref{Fig2}) of $G$ is bipartite with $|E(S')|>|E(S)|$, a contradiction.\\
\\
\textbf{Subcase 2.2:} $x_1$ and $x_m$ are adjacent.\\
Without loss of generality, suppose that $w(x_1)\leq w(x_m)$. First, suppose that $w(y_1)<w(x_1)$. Let $S'_1=(S_1\setminus \{y_1\})\cup \{x_1\}$ and $S'=G[S'_1\cup S_2]$. The subgraph $S'$ of $G$ is bipartite with $|E(S')|=|E(S)|$ and $w(S')>w(S)$, a contradiction. So, we may assume that $w(y_1)\geq w(x_1)$. Thus, $w(z_1)<w(x_1)$. Let $S'_1=(S_1\setminus \{y_{m-1}\})\cup \{x_m\}$ and $S'_2=(S_2\setminus \{z_1\})\cup \{x_1\}$. In addition, let $S'=G[S'_1\cup S'_2]$. Clearly, $|E(S')|\geq |E(S)|$. Indeed, we removed at most two edges incident to $z_1$ and one edge incident to $y_{m-1}$, while we added the edges $x_1y_1,x_1x_m,x_mz_m$. Hence, we may assume that $|E(S')|=|E(S)|$ and $z_1$ has two neighbors in $S_1$. If $w(S')>w(S)$, we obtain a contradiction. Then, $w(S')\leq w(S)$. Now, as $w(z_1)<w(x_1)$, we must have $w(y_{m-1})>w(x_m)$. Precisely, $w(x_1)=w(z_1)+1$ and $w(x_m)=w(y_{m-1})-1$. That is, $w(S')=w(S)$. Now, $X_{S'}=(X_S\setminus \{x_1,x_m\})\cup \{x_{m-1},y_{m-1}\}$. Note that $w(x_{m-1})\geq w(y_{m-1})-1=w(x_m)\geq w(x_1)$. Then, $w(X_{S'})=w(X_S)+(w(x_{m-1})-w(x_1))+(w(y_{m-1})-w(x_m))\geq w(X_S)+0+1>w(X_S)$, a contradiction.
\end{proof}
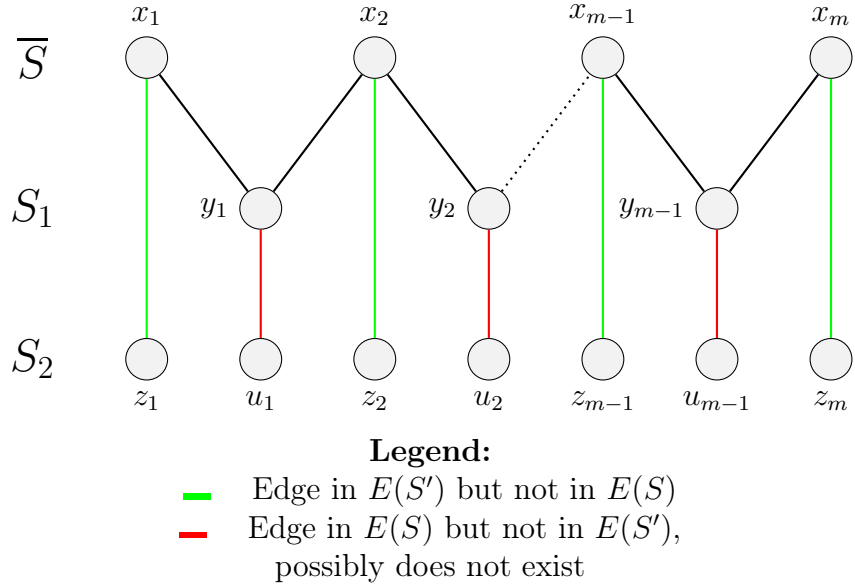
\begin{figure}[h!]
\centering
\begin{tikzpicture}[
    node/.style={circle, draw, fill=gray!10, minimum size=5.5mm},
    edge/.style={thick},
    green edge/.style={thick, draw=green},
    red edge/.style={thick, draw=red},
    black edge/.style={thick, draw=black},
    dotted edge/.style={thick, dotted},
    weight/.style={font=\bfseries\small}
]

\node[node,label={above:$x_1$}] (x1) at (0, 0) {};
\node[node,label={above:$x_2$}] (x2) at (3, 0) {};
\node[node,label={above:$x_{m-1}$}] (xm-1) at (6, 0) {};
\node[node,label={above:$x_m$}] (xm) at (9, 0) {};

\node[node,label={left:$y_1$}] (y1) at (1.5, -2) {};
\node[node,label={left:$y_2$}] (y2) at (4.5, -2) {};
\node[node,label={left:$y_{m-1}$}] (ym-1) at (7.5, -2) {};

\node[node,label={below:$u_1$}] (u1) at (1.5, -4) {};
\node[node,label={below:$u_2$}] (u2) at (4.5, -4) {};
\node[node,label={below:$u_{m-1}$}] (um-1) at (7.5, -4) {};

\node[node,label={below:$z_1$}] (z1) at (0, -4) {};
\node[node,label={below:$z_2$}] (z2) at (3, -4) {};
\node[node,label={below:$z_{m-1}$}] (zm-1) at (6, -4) {};
\node[node,label={below:$z_m$}] (zm) at (9, -4) {};

\draw[green edge] (x1) -- (z1);
\draw[green edge] (x2) -- (z2);
\draw[green edge] (xm-1) -- (zm-1);
\draw[green edge] (xm) -- (zm);

\draw[red edge] (y1) -- (u1);
\draw[red edge] (y2) -- (u2);
\draw[red edge] (ym-1) -- (um-1);

\draw[black edge] (x1) -- (y1);
\draw[black edge] (x2) -- (y2);
\draw[black edge] (xm-1) -- (ym-1);

\draw[black edge] (x2) -- (y1);
\draw[black edge] (xm) -- (ym-1);

\draw[dotted edge] (y2) -- (xm-1);

\node at (-1.5, 0) {\Large $\overline{S}$}; 
\node at (-1.5, -2) {\Large $S_1$}; 
\node at (-1.5, -4) {\Large $S_2$}; 

\end{tikzpicture}
\begin{minipage}[t]{0.5\textwidth}
\centering
\textbf{Legend:}\\
\textcolor{green}{\rule{10pt}{2pt}} \quad Edge in $E(S')$ but not in $E(S)$ \\
\textcolor{red}{\rule{10pt}{2pt}} \quad Edge in $E(S)$ but not in $E(S')$, possibly does not exist
\end{minipage}
\caption{Representation of Subcase 2.1 of Claim \ref{Claim4}. Note that $x_1$ and $x_m$ may have a neighbor in $\overline{S}$.}
\end{figure}\label{Fig2}\\
\\
Now, let $\{H_1,H_2\}$ be a partition of $G^2[\overline{S}]$ such that $H_1$ and $H_2$ are independent in $G^2[\overline{S}]$. By coloring the vertices of $H_1$ (resp., $H_2$) by $2_a$ (resp., $2_b$) and the vertices of $S_1$ (resp., $S_2$) by $1_a$ (resp., $1_b$), we obtain a $(1,1,2,2)$-packing coloring of $G$, which is a contradiction.
\end{proof}

\section{Cubic Graphs}
Cubic graphs form the most delicate subfamily of subcubic graphs with respect to $(1,1,2,2)$-packing colorability, since all vertices have maximum degree and the minimum-degree argument used in Section~2 no longer applies. While Theorem~1 completely resolves the problem for non-regular subcubic graphs, the case of cubic graphs requires a separate and more refined approach.

In this section, we identify several broad structural configurations that guarantee $(1,1,2,2)$-packing colorability for cubic graphs. In particular, we prove that every connected cubic graph containing a diamond (Corollary~1), every connected cubic graph with a vertex whose removal yields three connected components 
(Corollary~2), and every connected cubic graph with a bridge whose ends lie on cycles of length at most four (Corollary~3) are $(1,1,2,2)$-packing colorable. 
Together with Theorem~1, these results imply $\chi_\rho(S(G)) \leq 5$ for a wide class of connected subcubic graphs, leaving only very restricted cubic graphs as potential counterexamples to the conjecture of Gastineau and Togni.

Finally, we present Corollary~4, which provides a short and elementary argument showing that every connected cubic graph admits a $(1,1,2,2,k)$-packing coloring in which at most one vertex is assigned color $k$, where $k$ is arbitrary. This result strengthens the recent theorem of Liu, Zhang, and Zhang~\cite{LZZ24}, who proved that every subcubic graph is $(1,1,2,2,3)$-packing colorable. Our approach shows that the extension to a fifth color is immediate once a $(1,1,2,2)$-coloring is known for $G-e$ for any edge $e$ of $G$, and that the additional color can be restricted to a single vertex. This provides a conceptually simpler proof of $\chi_\rho(S(G)) \leq 6$ for all subcubic graphs and narrows the gap between $(1,1,2,2)$-colorability and $(1,1,2,2,k)$-colorability much more than previously recognized.
\begin{corollary}\label{corollary 1}
    Every connected cubic graph containing a diamond is $(1,1,2,2)$-packing colorable.
\end{corollary}
\begin{proof}
Let $G$ be a cubic graph that contains a diamond and let $x,y,u,v$ be the vertices of the diamond such that $N(x)=\{y,u,v\}$ and $N(y)=\{x,u,v\}$. Without loss of generality, we may suppose that $G$ is connected. If $u$ is adjacent to $v$, then the result is trivial. So, we may assume that $uv\notin E(G)$. Let $u_1$ (resp., $v_1$) be the neighbor of $u$ (resp., $v$) other than $x$ and $y$. Let $G'$ be the graph obtained by deleting $x$ from $G$ and adding the edge $uv$. Clearly, $G'$ is non-regular. Consider a $(1,1,2,2)$-packing coloring of $G'$, using the colors $1_a,1_b,2_a,2_b$. If neither of the neighbors of $x$ is colored by $1_a$ (resp., $1_b$), then by coloring $x$ using $1_a$ (resp., $1_b$) we obtain a $(1,1,2,2)$-packing coloring of $G$. So, we may assume that at least one of the neighbors of $x$ is colored by $1_a$ (resp., $1_b$). Without loss of generality, we may suppose that $u$ is colored by $1_a$. We distinguish two cases.\\
\\
\textbf{Case 1:} $v$ is colored by $1_b$.\\
Here, $y$ is not colored by $1_a$ nor $1_b$. If $u_1$ is not colored by $1_b$, then we can recolor $u$ by $1_b$ and $x$ by $1_a$. The result follows. So, we may assume that $u_1$ (resp., $v_1$) is colored by $1_b$ (resp., $1_a$). Note that the vertices that are at a distance of at most $2$ from $x$ are $u,u_1,v,v_1,y$. Hence, either $2_a$ or $2_b$ can be assigned to $x$ to obtain a $(1,1,2,2)$-packing coloring of $G$.\\
\\
\textbf{Case 2:} $y$ is colored by $1_b$.\\
Let $k$ be the color of $v$. Obviously, $k\notin \{1_a,1_b\}$. If $v_1$ is colored by $1_b$, then recolor $v$ by $1_a$ and $x$ by $k$. The result follows. So, we may assume that $v_1$ is colored by $k'\neq 1_b$. Suppose that $u_1$ is colored by $1_b$. Then, we may assume that $\{k,k'\}=\{2_a,2_b\}$, since otherwise one can find a color $2_i$ for $x$ to obtain a $(1,1,2,2)$-packing coloring of $G$. Here, recolor $v$ by $1_a$ and $x$ by $k$. The result follows. Finally, we may suppose that $u_1$ is colored by $k''\neq 1_b$. Recolor $u$ and $v$ by $1_b$, $y$ by $1_a$, and $x$ by $k$. The result follows.
\end{proof}

\begin{corollary}\label{corollary 2}
    Every connected cubic graph $G$ having a vertex $v$ such that $G-v$ has three connected components is $(1,1,2,2)$-packing colorable.
\end{corollary}
\begin{proof}
Let $G$ be a connected cubic graph and $v$ be a vertex in $G$ such that $G-v$ has three connected components $G_1,G_2,G_3$. Clearly, $G_1,G_2,G_3$ are non-regular. Let $v_1,v_2,v_3$ be the neighbors of $v$ in $G_1,G_2,G_3$, respectively. We can consider a $(1,1,2,2)$-packing coloring of $G_i$, for all $i\in \{1,2,3\}$, such that $v_i$ is not colored by $1_a$. Now, color $v$ by $1_a$ to obtain a $(1,1,2,2)$-packing coloring of $G$.
\end{proof}

\begin{corollary}
Let $G$ be a connected cubic graph that contains a bridge $e = xy$ such that each of $x$ and $y$ belongs to a cycle of length at most four. 
Then, $G$ admits a $(1,1,2,2)$-packing coloring.
\end{corollary}
\begin{proof}
Let $G' = G - e$. By Theorem~1, $G'$ admits a $(1,1,2,2)$-packing coloring. Fix such a coloring $c : V(G') \to \{1_a,1_b,2_a,2_b\}$. If $c$ is still valid after restoring $e$, we are done. Otherwise, the only possible conflicts in $G$ created by restoring $xy$ are:
\begin{enumerate}
  \item[\textup{(A)}] $x$ and $y$ receive the same color;
  \item[\textup{(B)}] $x$ and a neighbor of $y$ have the same $2$-color;
  \item[\textup{(C)}] $y$ and a neighbor of $x$ have the same $2$-color.
\end{enumerate}
 Let $y_1$ and $y_2$ be the neighbors of $y$ other than $x$. Here is the treatment of each conflict when it appears:\begin{itemize}
 \item \textbf{Resolution of conflict (A).}
 In this case, it is enough to switch the colors $1_a$ and $1_b$ (resp. $2_a$ and $2_b$) if $x$ and $y$ have the same 1-color (resp. 2-color) in the connected component containing $x$ in $G'$. Hence, conflict (A) is always resolvable.
 
\item \textbf{Resolution of conflict (B) when conflict (C) doesn't occur.} 
 Suppose $x$ and $y_1$ have the same 2-color, say $2_a$. If $y_2$ is not colored by $2_b$, then switch $2_a$ and $2_b$ in the  component of $x$ in $G'$ and so the conflict is removed. Else; that is $y_2$ is of color $2_b$. In this case, a global switch in the component of $x$ is not enough to resolve the conflict. Because $y$ lies on a cycle $C$ of length $3$ or $4$, we will show that we can locally recolor to make one of $y_1,y_2$ a $1$-colored vertex and then resolve the conflict accordingly.

If $G[\{y,y_1,y_2\}]$ is a triangle, then recolor $y_1$ by $1_i$, $i\in\{a,b\}$, such that $y_i$ has no neighbor of color $1_i$ in $G\setminus \{y\}$ and recolor $y$ by $1_j$ such that $j\in \{a,b\}\setminus\{i\}$. This removes the conflict. 

For the case when  $G[\{y,y_1,y_2\}]$ is not a triangle, then $y$, $y_1$ and $y_2$ are contained in a  square. Let $z$ be the fourth vertex of this square. If there exists $i\in\{a,b\}$ such that none of the neighbors of $y_1$ in $G\setminus \{y\}$ is of color $1_i$ then proceed as in the previous case; that is  recolor $y_1$ by $1_i$ and recolor $y$ by $1_j$ such that $j\in \{a,b\}\setminus\{i\}$. Else, if there exists $i\in\{a,b\}$ such that none of the neighbors of $y_2$ in $G\setminus \{y\}$ is of color $1_i$, then recolor $y_2$ by $1_i$,   recolor $y$ by $1_j$ such that $j\in \{a,b\}\setminus\{1_i\}$, and switch the colors $2_a$ and $2_b$ in the  component of $x$ in $G'$. Else, both colors $1_a$ and $1_b$ are given to the neighbors of $y_1$ (resp. $y_2$) in $G\setminus \{y\}$. Let $z_1$ (resp. $z_2$) be the third neighbor of $y_1$ (resp. $y_2$). Hence, $z$ is of color $1_l$ and $z_1$ and $z_2$ are both of color $1_k$ such that $\{1_l,1_k\}=\{1_a,1_b\}$. We have the following observation:
\begin{claim}
    The vertex $z$ can be given a color other than $1_l$ and then $y_1$ or $y_2$ can be recolored in order to remove the conflict.
\end{claim}
\begin{proof}
If $z$ has no neighbor of color $1_k$,  recolor $z$ by $1_k$.  Then,  none of the neighbors of $y_1$ in $G\setminus \{y\}$ is now of color $1_l$ and so the conflict can be removed as before by recoloring $y_1$ by $1_l$ and $y$ by $1_k$. Else, let $z'$ be the neighbor of $z$ of color $1_k$. If $z'$ has no neighbor other than $z$ of color $1_l$, then recolor $z'$ by $1_l$, $z$ by $1_k$, $y_1$ by $1_l$, and $y$ by $1_k$. Hence, the conflict is removed.  Otherwise, $z'$ has two neighbors of color $1_l$ and so, since   $G[\{y,y_1,y_2,z\}]$ is a square, then there exists $i\in \{a,b\}$ such that $z$ is at distance at least three from each vertex of color $2_i$ in $G\setminus \{y_1,y_2\}$. If $i=a$, recolor $z$ by $2_a$, $y_1$ by $1_l$, $y$ by $1_k$, and so the conflict is removed. Else; that is $i=b$, then recolor $z$ by $2_b$, $y_2$  by $1_l$, $y$ by $1_k$, and finally switch the two colors $2_a$ and $2_b$ in the connected component containing $x$. 
\end{proof}

\item \textbf{Resolution when both (B) and (C) occur.}
In this case, there exists $i\in \{a,b\}$ (resp. $j\in\{a,b\}$) such that $x$ (resp $y$) has no neighbor of color $1_i$ (resp $1_j$). Thus, recolor $x$ by $1_i$. For $y$, recolor $y$ by $1_j$ if $i\neq j$. Else; that is $i=j$, switch $1_a$ and $1_b$ in the connected component containing $y$ in $G'$ and then recolor $y$ by $1_l$ such that $l\neq i$.\end{itemize}
\end{proof}

\begin{corollary}
Every connected cubic graph is $(1,1,2,2,k)$-packing colorable in such a way that at most one vertex receives the color $k$.
\end{corollary}
\begin{proof}
Let $G$ be a connected cubic graph and let $e=xy\in E(G)$. Set $G'=G-e$. By Theorem~1, $G'$ admits a $(1,1,2,2)$-packing coloring; fix one such coloring
$c:V(G')\to\{1_a,1_b,2_a,2_b\}$.
If $c$ remains a $(1,1,2,2)$-packing coloring after adding back the edge $xy$, we are done. Otherwise, the same conflicts as in the previous proof will appear.

If \textup{(A)} appears, recolor $x$ by $k$; this yields a $(1,1,2,2,k)$-packing coloring with a unique vertex colored $k$.  
If exactly one of \textup{(B)} or \textup{(C)} appears, recolor the endpoint involved in that conflict (\emph{i.e.}, $x$ in case \textup{(B)}, or $y$ in case \textup{(C)}) by $k$; again we use $k$ at a single vertex.

It remains to handle the case when both \textup{(B)} and \textup{(C)} appear. In this case, there exists $i\in \{a,b\}$ such that none of the neighbors of $x$ is of color $1_i$.  Recolor $x$ by $1_i$ and recolor $y$ by $k$ and so we obtain
a $(1,1,2,2,k)$-packing coloring of $G$ with at most one vertex colored $k$.
\end{proof}

\section{Conclusions and Open Problems}

\begin{corollary}
    Let $G$ be a connected subcubic graph that contains one of the following:\begin{itemize}
        \item[(i)] A 1-vertex or a 2-vertex,
      \item[(ii)] A diamond, 
       \item[(iii)] A vertex $v$ such that $G-v$ has three connected components, or
       \item[(iv)] A bridge whose each of its ends relies on a cycle of length at most four.
    \end{itemize}
   Then, $\chi_{\rho}(S(G))\leq 5$.
\end{corollary}
The above results significantly narrow the scope of the conjecture of Bre\v{s}ar et al., reducing it to a very restricted class of cubic graphs. In particular, it is natural to ask the following question:
\begin{problem}
Is every connected cubic graph with a cut-vertex $(1,1,2,2)$-packing colorable?
\end{problem}

A positive answer would settle the $(1,1,2,2)$-colorability for all cubic graphs that are not $3$-connected, leaving only the $3$-connected cubic graphs as potential counterexamples to the conjecture of Bre\v{s}ar et al.


\begin{thebibliography}{99}

\bibitem{bkl} J. Balogh, A. Kostochka, and X. Liu, Packing chromatic number of cubic graphs, \textit{Discrete Math.} 341(2) (2018), 474--483.

\bibitem{3} J. Balogh, A. Kostochka, and X. Liu, Packing Chromatic Number of Subdivisions of Cubic Graphs, \textit{Graphs Combin.} 35(2) (2019), 513--537.

\bibitem{bf} B. Bre\v{s}ar and J. Ferme, An infinite family of subcubic graphs with unbounded packing chromatic number, \textit{Discrete Math.} 341(8) (2018), 2337--2342.

\bibitem{bfk} B. Brešar, J. Ferme, S. Klavžar, and D.F. Rall, A survey on packing colorings, \textit{Discuss. Math. Graph Theory} 40(4) (2020), 923-970.
 
\bibitem {11} B. Bre\v{s}ar, N. Gastineau, and O. Togni, Packing colorings of subcubic outerplanar graphs, \textit{Aequationes Math.}, 94(5), pages 945-967 2020. 

\bibitem {BKR} B. Brešar, K. Kuenzel, and D. F. Rall, Claw-free cubic graphs are $(1, 1, 2, 2)$-colorable, \textit{Discrete Math.} 348 (8) (2025), 114477.
 
\bibitem{8} B. Bre\v{s}ar, S. Klav\v{z}ar, D.F. Rall, and K. Wash, Packing chromatic number under local changes in a graph, \textit{Discrete Math.} 340 (2017), 1110--1115.

\bibitem{9} B. Bre\v{s}ar, S. Klav\v{z}ar, D.F. Rall, and K.Wash, Packing chromatic number, $(1,1,2,2)$-colorings, and characterizing the Petersen graph, \textit{Aequationes Math.} 91 (2017), 169--184.

\bibitem {16} N. Gastineau and O. Togni, $S$-packing colorings of cubic graphs, \textit{Discrete Math.} 339 (2016), 2461--2470.
 
\bibitem{d} W. Goddard, S.M. Hedetniemi, S.T. Hedetniemi, J.M. Harris, and D.F. Rall, Broadcast chromatic numbers of graphs, \textit{Ars Combin.} 86 (2008) 33--49. 
 
\bibitem {24} D. Laiche, I. Bouchemakh, and \'{E}. Sopena, Packing coloring of some undirected and oriented coronae graphs,\textit{ Discuss. Math. Graph Theory}, 37 (3) (2017), 665--690.
   
\bibitem {m} R. Liu, X. Liu, M. Rolek and G. Yu, Packing $(1,1,2,2)$-coloring of some subcubic graphs, \textit{Discrete Applied Math.} 283 (2020), 626--630.

\bibitem{LW}  X. Liu and Y. Wang, Partition subcubic planar graphs into independent sets, arXiV 2408.12189 (2024), math.CO.

\bibitem{LZZ24} X. Liu, X. Zhang, and Y. Zhang, Every subcubic graph is packing $(1,1,2,2,3)$-colorable, \textit{Discrete Math.} 348 (11) (2025). 114610. 

\bibitem {26} M. Mortada and O. Togni, About $S$-Packing Coloring of Subcubic Graphs, \textit{Discrete Math.} 347(5) (2024).

\bibitem {27} M. Mortada and O. Togni, About $S$-Packing Coloring of 2-Saturated Subcubic Graphs, \textit{Austral. J. Combin.} 90 (2) (2024), 155–-167.

\bibitem {mai} M. Mortada, About $S$-Packing Coloring of 3-Irregular Subcubic Graphs, \textit{Discrete Applied Math.} 359 (2024), 16--18.

\bibitem {mai1} M. Mortada and O. Togni, Further results and questions on $S$-packing coloring of subcubic graphs, \textit{Discrete Math.} 348(4) (2025), 114376.

\bibitem {25} C. Sloper, An eccentric coloring of trees, \textit{Austral. J. Combin.} 29 (2004), 30--321.

\bibitem{i} B. Tarhini and O. Togni, $S$-Packing Coloring of Cubic Halin Graphs, \textit{Discrete Applied Math.} 349(31) (2024), 53--58.

\bibitem{a}  W. Yang and B. Wu, On packing $S$-colorings of subcubic graphs, \textit{Discrete Applied Math.} 334 (2023), 1--14.

\end{thebibliography}
\end{document}